\numberwithin{equation}{section}
\numberwithin{figure}{section}
\numberwithin{table}{section}
\theoremstyle{plain}
\newtheorem{theorem}{Theorem}[section]
\newtheorem{lemma}[theorem]{Lemma}
\newtheorem{corollary}[theorem]{Corollary}
\theoremstyle{definition}
\theoremstyle{remark}
\newcommand{\norm}[1]{\lVert#1\rVert}
\newcommand{\abs}[1]{\lvert#1\rvert} 
\newcommand{\inner}[1]{\langle#1\rangle} 
\newcommand{\essinf}{\mathop{\textup{ess\,inf}}}
\newcommand{\spanm}{\mathop{\textup{span}}}
\newcommand{\di}{\mathrm{d}}   
\newcommand{\R}{\mathbb{R}}
\newcommand{\N}{\mathbb{N}}
\newcommand{\C}{\mathbb{C}}
\newcommand{\sphe}{{\partial B}}
\begin{document}

\title[Linearized problem of electrical impedance tomography]{Eigenstructure of the linearized electrical impedance tomography problem under radial perturbations}

\author[M.~Hirvensalo]{Markus Hirvensalo}
\address[M.~Hirvensalo]{Department of Mathematics and Systems Analysis, Aalto University, P.O. Box~11100, 00076 Helsinki, Finland.}
\email{markus.hirvensalo@aalto.fi}

\begin{abstract}
    We analyze the Fr\'echet derivative \( F \), that maps a perturbation in conductivity to the linearized change in boundary measurements governed by the conductivity equation. 
    The domain is taken to be the unit ball \( B \subset \R^d \) with \( d \geq 2 \), and we choose perturbations \( \eta \) from the Hilbert space \( L^2(B) \).
    Under the condition that the perturbations are rotationally symmetric, we show that the eigenfunctions of the linear approximation \( F \eta \) correspond to the spherical harmonics.
    Furthermore, we establish an explicit formula for the associated eigenvalues and show that for perturbations from any bounded subset, the decay of these eigenvalues is uniform with respect to the degree of the spherical harmonics.
    The established structure of \( F \eta \) enables us to show that the Fr\'echet derivative \( F \) can be approximated by finite-rank operators when restricted to rotationally symmetric perturbations.
    Both the extension to \( L^2(B) \) perturbations and the approximability by finite-rank operators are favorable properties for further analysis of \( F \) in numerical algorithms.
\end{abstract}
%

\maketitle

\section{Introduction} \label{sec:intro}

We study the classical conductivity equation, which describes how an electric potential is induced within a conductive medium by an input current density applied on its surface. 
Our analysis is carried out in a simplified setting, where the domain is the Euclidean unit ball \( B \subset \R^d \) in \( d \geq 2 \) spatial dimensions, with boundary \( \sphe \) (the unit sphere).

The conductivity of the medium is described by a real-valued\footnote{Unless explicitly indicated, the function spaces considered in this work have \( \C \) as the multiplier field.} coefficient \( \gamma \in L^\infty(B; \R) \), which we assume to be strictly positive in the sense that \( \essinf \gamma > 0 \).
Given a surface current density
\begin{equation*}
    f \in L^2_\diamond(\sphe) := \{ g\in L^2(\sphe) \mid \inner{g,1}_{L^2(\sphe)} = 0 \},
\end{equation*}
the conductivity equation states that the corresponding electric potential \( u \) satisfies
\begin{equation} \label{eq:strong_form}
    -\mathrm{div}(\gamma\nabla u) = 0 \text{ in } B, \qquad \gamma \,\frac{\partial u}{\partial r} = f \text{ on } \sphe,
\end{equation}
where \( r \) is the radial coordinate.

The problem \eqref{eq:strong_form} admits a unique weak solution \( u^\gamma_f \in H^1_\diamond(B) \) by virtue of the Lax–Milgram lemma. 
The Sobolev space for the solutions is defined as
\begin{equation*}
    H_\diamond^1(B) := \{w\in H^1(B) \mid w|_{\sphe} \in L^2_\diamond(\sphe)\}.
\end{equation*}
We may define the Neumann-to-Dirichlet (ND) map:
\[ \Lambda(\gamma) : f \to u^\gamma_f |_{\sphe}, \]
which assigns each admissible boundary current \( f \) to the resulting potential observed on the boundary \( \sphe \). 
The ND map is a compact and self-adjoint operator on \(  L^2_\diamond(\sphe) \).

It is well known that the nonlinear forward map \( \gamma\mapsto\Lambda(\gamma) \) is Fr\'echet differentiable with respect to a complex-valued perturbations \( \eta \). 
We denote by \( F \eta := D \Lambda(1; \eta) \) the Fr\'echet derivative of \( \Lambda \) at \( \gamma = 1 \) in the direction of \( \eta \). 
If \( u_f \) and \( u_g \) are harmonic functions in \( B \) with \( f \) and \( g \) as their Neumann boundary values, respectively, then \( F\in \mathscr{L}(L^\infty(B),\mathscr{L}(L^2_\diamond(\sphe))) \) is characterized by
\begin{equation} \label{eq:frechet_redef}
    \inner{(F \eta) f, g}_{L^2(\sphe)} = -\int_B \eta \nabla u_f \cdot \overline{\nabla u_g} \,\di x,
\end{equation}
for all \( \eta \in L^\infty(B) \) and \( f, g \in L^2_\diamond(\sphe) \).

The operator \( F \) arises in certain inverse boundary value problems, such as electrical impedance tomography (EIT).
A practical application is the single-step linearization approximation, where we assume \( \Lambda(1) \) is known or simulated and aim to recover a perturbation \( \eta \) from information on \( \Lambda(1 + \eta) \).
This approach has shown practical success (e.g., \cite{Cheney90}).
For a more comprehensive overview of EIT, we refer the reader to the review articles \cite{Cheney99, Uhlmann2009, Borcea2002a}.

While standard theory guarantees the continuity of \( F : L^\infty(B) \to \mathscr{L}(L^2_\diamond(\sphe)) \), it offers little insight into its approximation by finite-rank operators.
According to \cite[Appendix~A]{Garde2025}, the mapping \( F \) continuously extends to perturbations on \( L^d(B) \), but its approximability by finite-rank operators and compactness remain unstudied.
In this work, we consider the Fr\'echet derivative \( F \) on the subspace \( \mathcal{R} \subset L^2(B) \) consisting of rotationally symmetric perturbations (cf. \eqref{eq:space_of_eta}).
We then study \( F \) in a stronger topology, more specifically as a mapping \( F : \mathcal{R} \to \mathscr{L}(L^2_\diamond(\sphe)) \), and prove that \( F \) can be approximated by finite-rank operators.

To the best of our knowledge, the extension to \( L^2(B) \) perturbations has only been shown in two spatial dimension (see the article by Garde and Hyv\"onen \cite{GH2024}), here we establish the extension in any spatial dimension \( d \geq 2 \) for any \( \eta \) in an infinite dimensional subspace \( \mathcal{R} \subset L^2(B) \).
We believe that this is the first result proving that the Fr\'echet derivative is compact and approximable by finite-rank operators with respect to a topology stronger (or equally strong) than the standard \( \mathscr{L}(L^\infty(B), \mathscr{L}(L^2_\diamond(\sphe))) \) topology.
The extension to \( \mathcal{R} \) perturbations and the approximability by finite-rank operators are desired properties from the standpoint of numerical implementations.

The (linearized) Calder{\'o}n problem in the radial setting has recently been studied in the context of Born approximations \cite{Barcelo2022, Barcelo2024}.
Closely related to this approach, the reader may also find the recent preprints \cite{Daude2026, Castro2026} of interest.

\subsection{Article structure}

In summary, the remainder of this text is structured as follows.
Section~\ref{sec:main_results} presents the main results, including Theorem~\ref{main_thm}, which characterizes the eigenstructure of the linear approximation \( F \eta \) for \( \eta \) belonging to an infinite-dimensional subspace of \( L^2(B) \), and Corollary~\ref{main:corollary}, which establishes the compactness of \( F \) when restricted to the same subspace.
Section~\ref{sec:basis} introduces two auxiliary results, Lemma~\ref{lemma:gradient} and Lemma~\ref{lemma:Jacobi}, which serve as key tools in the proof of the main theorem.
Section~\ref{sec:proof} contains the proof of Theorem~\ref{main_thm} and the auxiliary result Lemma~\ref{proof:lemma_aux}.

\section{The main results} \label{sec:main_results}

The unit ball geometry \( B \) is highly symmetric, and thus it is natural to adopt polar coordinates.
For any \( x \in B \setminus \{ 0 \} \), we write
\begin{equation} \label{eq:polar_coordinates}
    x = r \theta, \quad \text{where} \quad r = \norm{x}, \quad \theta = \frac{x}{\norm{x}}.
\end{equation}
Here, \( r \in (0, 1) \) denotes the radial distance from the origin and \( \theta \in \sphe \) represents the angular component.
Further, we denote the Euclidean norm by \( \norm{x} \).

In polar coordinates, the function space \( L^2(B) \) can be decomposed via separation of variables into a weighted radial space \( L^2_{r^{d-1}}((0,1)) \) and an angular (surface) space \( L^2(\sphe) \).
Here, \( L^2_{r^{d-1}}((0,1)) \) is the space of square-integrable functions on the interval \( (0, 1) \) with respect to the weight \( r^{d-1} \), and \( L^2(\sphe) \) is the standard \( L^2 \) space on the unit sphere.

A natural orthonormal basis for the radial space is given by certain Jacobi polynomials denoted as \( P_k(r) \), where \( k \) indicates the degree of the polynomial.
For the angular component \( L^2(\sphe) \), the spherical harmonics \( f_{\ell, m}(\theta) \), indexed by degree \( \ell \) and order \( m \), provide an orthonormal basis.
Further details regarding these basis functions are provided in Section~\ref{sec:basis}.

Before stating the main result of this work, we define the space of \emph{radial perturbations} as
\begin{equation} \label{eq:space_of_eta}
    \mathcal{R} := \left\{ \eta \in L^2(B) \mid \eta(r, \theta) = \eta(r, \theta') \text{ for almost every } r \in (0, 1) \text{ and } \theta, \theta' \in \sphe \right\}.
\end{equation}
In essence, \( \mathcal{R} \) consists of functions in \( L^2(B) \) that are independent of the angular coordinate \( \theta \). 
That is, for each fixed radius \( r \in (0, 1) \), the function \( \eta(r,\,\cdot\,) \) is constant almost everywhere on the sphere.
In what follows, we abuse the notation by denoting with \( F \) the restriction of \( F \) to \( \mathcal{R} \).

Since any \( \eta \in \mathcal{R} \) is essentially a function depending only on the variable \( r \), it can be expanded solely in the Jacobi polynomials
\begin{equation} \label{eq:radial_expa}
    \eta(r) = \sum^\infty_{k=0} a_k(\eta) \, P_k(r), \quad a_k(\eta) := \int^1_0 \eta(r) \, P_k(r) \, r^{d-1} \di r.
\end{equation}
\begin{theorem} \label{main_thm}
    Let \( \eta \in \mathcal{R} \) be expanded as in \eqref{eq:radial_expa} and let \( f_{\ell,m} \) denote the spherical harmonics on the sphere \( \sphe \) with degree \( \ell \geq 1 \) and order \( m \).
    The operator \( F \eta : L^2_\diamond(\sphe) \to L^2_\diamond(\sphe) \) expanded in the spherical harmonic basis is diagonal and has an eigenstructure of the form
    \begin{equation}
        F \eta = \sum_{\ell, m} \lambda_\ell(\eta) \, \inner{\,\cdot\,, f_{\ell, m}}_{L^2(\sphe)} f_{\ell, m},
    \end{equation}
    where
    \begin{equation*}
        \lambda_\ell(\eta) = \sum^{2 \ell - 2}_{k=0} (-1)^{k+1} \, a_k(\eta) \, \frac{\sqrt{2k+d}}{\ell} \frac{(2\ell-2+d)! \, (2\ell-2)!}{(2\ell-2+d+k)! \, (2\ell-2-k)!}.
    \end{equation*}
    The eigenvalues \( \lambda_\ell(\eta) \) are independent of the index \( m \) and are bounded from above as
    \begin{equation}
        \abs{\lambda_\ell(\eta)} \leq C_d \, \| \eta \|_{L^2(B)} \, \ell^{-\frac{1}{2}},
    \end{equation}
    where the constant \( C_d \) depends only on the dimension \( d \geq 2 \).
\end{theorem}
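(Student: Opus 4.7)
The approach is to compute $\inner{(F\eta) f_{\ell,m}, f_{\ell',m'}}_{L^2(\sphe)}$ directly from the characterization \eqref{eq:frechet_redef}. The harmonic extension to $B$ of the spherical harmonic $f_{\ell,m}$ as Neumann datum is $u_{\ell,m}(r\theta) = (r^\ell/\ell)\, f_{\ell,m}(\theta)$. Using Lemma~\ref{lemma:gradient} to split the Euclidean gradient into its radial and tangential parts and changing to polar coordinates, the bilinear form decouples into a purely radial integral multiplied by an angular integral over $\sphe$. The angular integral is the sum of $\int_\sphe f_{\ell,m}\overline{f_{\ell',m'}}\,\di S$ and $\int_\sphe \nabla_\sphe f_{\ell,m}\cdot\overline{\nabla_\sphe f_{\ell',m'}}\,\di S$; both vanish unless $(\ell,m) = (\ell',m')$ by orthogonality of the spherical harmonics together with the identity $-\Delta_\sphe f_{\ell,m} = \ell(\ell+d-2)\,f_{\ell,m}$. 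This forces the matrix of $F\eta$ in the spherical harmonic basis to be diagonal and independent of $m$.

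For the diagonal entries one is left with a prefactor $(2\ell+d-2)/\ell$ (the sum of $1$ from the radial contribution and $(\ell+d-2)/\ell$ from the tangential contribution) multiplying $\int_0^1 \eta(r)\,r^{2\ell-2}\,r^{d-1}\,\di r$. Since $r^{2\ell-2}$ is a polynomial of degree $2\ell-2$, Lemma~\ref{lemma:Jacobi} supplies a closed-form expansion of $r^{2\ell-2}$ in the orthonormal basis $\{P_k\}_{k=0}^{2\ell-2}$ of $L^2_{r^{d-1}}((0,1))$. Substituting this expansion and invoking \eqref{eq:radial_expa}, the radial integral becomes a finite linear combination of the coefficients $a_k(\eta)$; combining the Jacobi expansion coefficients with the prefactor $(2\ell+d-2)/\ell$ and simplifying should produce the explicit factorial-ratio formula in the statement.

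For the bound $|\lambda_\ell(\eta)| \leq C_d\,\|\eta\|_{L^2(B)}\,\ell^{-1/2}$, apply Cauchy--Schwarz in $\ell^2$ to the finite sum defining $\lambda_\ell(\eta)$. The factor $\sum_k |a_k(\eta)|^2$ is controlled by $\|\eta\|_{L^2(B)}^2$ up to a dimensional constant, since the $P_k$ are orthonormal in the weighted radial space and $\eta$ is radial. The complementary factor is, up to the prefactor $((2\ell+d-2)/\ell)^2$, the squared $L^2_{r^{d-1}}$-norm of the coefficient sequence of $r^{2\ell-2}$; by Parseval this equals $\int_0^1 r^{4\ell-4+d-1}\,\di r = 1/(4\ell-4+d)$. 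Hence the complementary factor is $O(\ell^{-1})$, which yields $O(\ell^{-1/2})$ after taking square roots. The main obstacle is bookkeeping: one must carry the normalization constants for the Jacobi polynomials and spherical harmonics precisely so that the Lemma~\ref{lemma:Jacobi} expansion coefficients, together with the prefactor $(2\ell+d-2)/\ell$, reproduce exactly the factorial ratio and the $\sqrt{2k+d}$ factor appearing in the theorem.
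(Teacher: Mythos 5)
Your proposal is correct. The diagonalization argument and the derivation of the explicit formula for \( \lambda_\ell(\eta) \) follow exactly the paper's route: harmonic extension \( u_{\ell,m} = \ell^{-1} r^\ell f_{\ell,m} \), the polar splitting of \( \nabla u_{\ell,m}\cdot\overline{\nabla u_{\ell',m'}} \), Lemma~\ref{lemma:gradient} for the angular orthogonality and the prefactor \( (2\ell+d-2)/\ell \), and Lemma~\ref{lemma:Jacobi} for the expansion of \( r^{2\ell-2} \).

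Where you genuinely diverge is the eigenvalue bound, and your route is both simpler and sharper. The coefficients multiplying \( a_k(\eta) \) in the theorem are, up to sign, exactly \( \tfrac{2\ell+d-2}{\ell}\,\chi_{2\ell-2,k} \), i.e.\ a scalar prefactor times the expansion coefficients of the monomial \( r^{2\ell-2} \) in the orthonormal system \( \{P_k\} \). Hence, as you observe, Cauchy--Schwarz plus Parseval gives
\begin{equation*}
  \abs{\lambda_\ell(\eta)} \leq \frac{2\ell+d-2}{\ell}\,\Big(\sum_k \abs{a_k(\eta)}^2\Big)^{\frac12}\Big(\sum_k \chi_{2\ell-2,k}^2\Big)^{\frac12}
  \leq \frac{2\ell+d-2}{\ell}\,\abs{\sphe}^{-\frac12}\norm{\eta}_{L^2(B)}\,(4\ell-4+d)^{-\frac12},
\end{equation*}
since \( \sum_k \chi_{2\ell-2,k}^2 = \norm{r^{2\ell-2}}^2_{L^2_{r^{d-1}}} = (4\ell-4+d)^{-1} \) and \( \sum_k\abs{a_k(\eta)}^2 \le \abs{\sphe}^{-1}\norm{\eta}^2_{L^2(B)} \) by Bessel. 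With \( (2\ell+d-2)/\ell \le d \) and \( 4\ell-4+d \ge 2\ell \) this yields \( C_d = d/\sqrt{2\,\abs{\sphe}} \), which is in fact smaller than the paper's constant. The paper, after the same Cauchy--Schwarz step, discards the square on the factorial ratio ``to simplify the analysis,'' which destroys exactly the Parseval structure you exploit; it is then forced into the digamma/trigamma estimates, Gr\"onwall's inequality, and an integral test to recover the \( \mathcal{O}(\ell^{-1}) \) behavior of the resulting sum. Your argument buys brevity and a better constant; the paper's machinery is only needed because of that simplification. Two minor points to tighten: Lemma~\ref{lemma:gradient} does not itself split the gradient (that is just the polar form of \( \nabla \)); it evaluates the tangential angular integral. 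And you should add the final, routine remark that a diagonal operator with uniformly bounded eigenvalues extends to a bounded operator on all of \( L^2_\diamond(\sphe) \), which the paper spells out via truncations \( g_N \).
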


Interestingly, Theorem~\ref{main_thm} establishes that the linear approximation \( F \eta \) is compact for every \( \eta \in \mathcal{R} \), which is not clear based on the compactness of the original ND map since \( \mathcal{R} \) is not a subspace of \( L^\infty(B) \).
Moreover, if we consider perturbations \( \eta \in \mathcal{R} \) with a fixed norm upper bound \( \| \eta \|_{L^2(B)} \leq \omega \), where \( \omega > 0 \) is some fixed constant, then the eigenvalues \( \lambda_\ell(\eta) \) decay uniformly in \( \ell \).
This suggests that the Fr\'echet derivative \( F \), viewed as an element of  \( \mathscr{L}\big( \mathcal{R}, \mathscr{L}(L^2_\diamond(\sphe)) \big) \) is compact.

A key ingredient in this argument is a general result from Banach space theory: given Banach spaces \( X \) and \( Y \), a linear and bounded operator \( A \in \mathscr{L}(X, Y) \) is compact if it can be approximated in the operator norm by a sequence of finite-rank operators. 
Here, the operator norm is defined in the usual way by
\[ \| A \|_{\rm op} := \sup_{x \in X, \, \| x \|_X=1} \| Ax \|_Y. \]
This result follows from the fact that the space of compact operators \( \mathscr{K}(X, Y) \subset \mathscr{L}(X, Y) \) is closed in the operator topology, and that every finite-rank operator is compact.

These observations yield the following compactness result for the Fr\'echet derivative.
\begin{corollary} \label{main:corollary}
    The Fr\'echet derivative \( F \in \mathscr{L}\left( \mathcal{R}, \mathscr{L}(L^2_\diamond(\sphe)) \right) \) can be approximated by finite-rank operators in the operator topology.
\end{corollary}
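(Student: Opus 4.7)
The plan is to define the natural truncation operator
\begin{equation*}
    F_N \eta := \sum_{\ell=1}^{N} \sum_{m} \lambda_\ell(\eta) \inner{\,\cdot\,, f_{\ell, m}}_{L^2(\sphe)} f_{\ell, m},
\end{equation*}
which keeps only spherical harmonics of degree at most \( N \) in the diagonal representation supplied by Theorem~\ref{main_thm}. I would then verify two things in turn: first, that each \( F_N \) is a finite-rank element of \( \mathscr{L}(\mathcal{R}, \mathscr{L}(L^2_\diamond(\sphe))) \); and second, that \( F_N \to F \) in this operator topology as \( N \to \infty \).

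For the finite-rank claim, the key observation is that the building blocks \( \inner{\,\cdot\,, f_{\ell,m}}_{L^2(\sphe)} f_{\ell,m} \) are fixed elements of \( \mathscr{L}(L^2_\diamond(\sphe)) \) independent of \( \eta \); only the scalar coefficients \( \lambda_\ell(\eta) \) depend on \( \eta \). Since the index set \( \{(\ell,m) : 1 \le \ell \le N\} \) is finite, the map \( \eta \mapsto F_N \eta \) takes values in the finite-dimensional subspace of \( \mathscr{L}(L^2_\diamond(\sphe)) \) spanned by those rank-one operators. Linearity of \( \lambda_\ell \) in \( \eta \) (clear from the formula in Theorem~\ref{main_thm} together with linearity of the Jacobi coefficients \( a_k \)) yields linearity of \( F_N \), and boundedness follows from the same eigenvalue estimate used below.

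For the convergence, I would fix \( \eta \in \mathcal{R} \) with \( \|\eta\|_{L^2(B)} \le 1 \) and use the diagonal structure of \( F\eta - F_N \eta \): it acts as multiplication by \( \lambda_\ell(\eta) \) on the eigenspace of degree \( \ell > N \) and vanishes otherwise, so its operator norm on \( L^2_\diamond(\sphe) \) equals \( \sup_{\ell > N} |\lambda_\ell(\eta)| \). Applying the uniform bound \( |\lambda_\ell(\eta)| \le C_d \|\eta\|_{L^2(B)} \ell^{-1/2} \) from Theorem~\ref{main_thm} gives
\begin{equation*}
    \|F \eta - F_N \eta\|_{\mathscr{L}(L^2_\diamond(\sphe))} \le C_d (N+1)^{-1/2},
\end{equation*}
and taking the supremum over the unit ball of \( \mathcal{R} \) yields \( \|F - F_N\|_{\mathscr{L}(\mathcal{R}, \mathscr{L}(L^2_\diamond(\sphe)))} \le C_d (N+1)^{-1/2} \to 0 \).

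There is no serious obstacle here; the corollary is a direct consequence of the explicit eigenstructure and the uniform decay rate. The only mild subtlety worth flagging in the write-up is distinguishing the two different notions of \emph{finite rank} in play — finite rank of \( F_N \eta \) as an operator on \( L^2_\diamond(\sphe) \) (which is automatic) versus finite rank of \( F_N \) itself as an operator from \( \mathcal{R} \) into the Banach space \( \mathscr{L}(L^2_\diamond(\sphe)) \) (which is what Corollary~\ref{main:corollary} actually requires).
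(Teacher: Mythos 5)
Your proposal is correct and follows essentially the same route as the paper: truncate the eigendecomposition at degree \( N \), observe that the resulting \( F_N \) has finite-dimensional range in \( \mathscr{L}(L^2_\diamond(\sphe)) \) because the projections are fixed and only the scalar functionals \( \lambda_\ell \) depend on \( \eta \), and then use the diagonal structure together with the bound \( \abs{\lambda_\ell(\eta)} \leq C_d \norm{\eta}_{L^2(B)} \ell^{-1/2} \) to conclude \( \norm{F - F_N} \to 0 \). Your closing remark distinguishing finite rank of \( F_N \eta \) on \( L^2_\diamond(\sphe) \) from finite rank of \( F_N \) as a map out of \( \mathcal{R} \) is a point the paper glosses over, and worth keeping.
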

\begin{proof}
    Let \( \eta \in \mathcal{R} \) and denote
    \begin{equation*}
        F \eta = \sum^\infty_{\ell=1} \lambda_\ell(\eta) \Psi_{\ell}, \quad \Psi_{\ell} := \sum_{m \in \mathcal{I_\ell}} \inner{\,\cdot\,, f_{\ell, m}}_{L^2(\sphe)} f_{\ell, m}.
    \end{equation*}
    For each \( \ell \in \N \), the orthogonal projection operator \( \Psi_{\ell} \in \mathscr{L}(L^2_\diamond(\sphe)) \) and the linear functional \( \lambda_\ell: \mathcal{R} \to \C \) are fixed.
    Thus, a rank \( N \in \N \) approximation of \( F \) is obtained by truncating its eigendecomposition,
    \[ F_N \eta := \sum_{\ell=1}^N \lambda_\ell(\eta) \Psi_{\ell}. \]

    Both \( F \eta \) and \( F_N \eta \) are diagonal (normal) operators in the considered basis, thereby the singular values of the difference operator are given by the absolute value of the diagonal entries
    \begin{equation*}
        \sigma^N_\ell(\eta) := \abs{\inner{\left( (F - F_N) \eta\right) f_{\ell, m}, f_{\ell, m}}_{L^2(\sphe)}} =
        \begin{cases}
            \abs{\lambda_\ell(\eta)}, &\text{if} \; N < \ell, \\
            0, &\text{if} \; N \geq \ell.
        \end{cases}
    \end{equation*}

    For a compact and normal operator \( A : X \to X \) on a separable Hilbert space \( X \), the operator norm can be given
    \[ \| A \|_{\rm op} = \max_{i \in I} \, \sigma_i, \]
    where \( \{ \sigma_i \}_{i \in I} \) is the set of singular values for \( A \).
    We use this result for \( A = (F - F_N) \eta \) in the following.
    By taking the limit with respect to the truncation index \( N \to \infty \), Theorem~\ref{main_thm} implies
    \begin{equation*}
        \lim_{N \to \infty} \| F - F_N \|_{\mathrm{op}} 
        = 
        \lim_{N \to \infty} \, \max_{\ell, m} \bigg( \sup_{\eta \in \mathcal{R}, \; \| \eta \|_{L^2(B)} = 1} \sigma^N_\ell(\eta) \bigg)
        \leq 
        C_d \, \lim_{N \to \infty} \max_{\ell > N} \, \ell^{-\frac{1}{2}} = 0.
    \end{equation*}
\end{proof}

In an earlier study \cite[Section~6.1]{Autio2025}, the radial singular functions of truncated \( F \) were computed numerically.
At the time, these functions were not explicitly interpreted as approximations for singular functions of \( F \) itself.
However, Corollary~\ref{main:corollary} suggests that repeating the numerical experiment within the framework and assumptions of the present work would yield approximations of the radial singular vectors of \( F \).

\section{On the basis elements} \label{sec:basis}

In this section, some basic properties of the basis elements \( P_k(r) \) and \( f_{\ell,m}(\theta) \) are discussed.
In particular, we prove two intermediate results Lemma~\ref{lemma:gradient} and Lemma~\ref{lemma:Jacobi} that are the main tools in the proof of Theorem~\ref{main_thm}.

\subsection{Spherical harmonics} \label{subsec:sph}

We start by recalling some facts about spherical harmonics; for additional insights on the topic, consult for example \cite[Chapter~5]{Axler_2001} and \cite{Efthimiou_2014}.

Let \( \alpha \in \N^d_0 \) be a \( d \)-tuple and use a multi-index notion \( x^\alpha = \prod_{i=1}^d x_i^{\alpha_i} \) and \( \abs{\alpha} = \sum_{i=1}^d \alpha_i \).
Then, we may define the space of \emph{harmonic homogeneous} polynomials of degree \( \ell \in N_0 \) as
\begin{equation*}
    \mathcal{P}_\ell = \Bigl\{ p : \R^d \to \C  \;\Big|\;  p(x) = \sum_{|\alpha| = \ell} c_\alpha x^\alpha, \; c_\alpha \in \C, \; \Delta p = 0 \Bigr\}.
\end{equation*}
The space of spherical harmonics of degree \( \ell \) is defined as the restriction of the harmonic homogeneous polynomials of degree \( \ell \) onto the unit sphere:
\begin{equation*}
    \mathcal{H}_\ell = \{ p|_{\sphe} \mid p \in \mathcal{P}_\ell \}.
\end{equation*}
We have 
\begin{equation} \label{dimHl}
    \dim\mathcal{H}_\ell = \dim\mathcal{P}_\ell = \binom{\ell+d-1}{d-1} - \binom{\ell+d-3}{d-1},
\end{equation}
with the convention \( \binom{m}{k} = 0 \) for nonnegative integers \( m < k \).

The \( \mathcal{H}_\ell \)-spaces are the eigenspaces for the Laplace–Beltrami operator \( \Delta_{\sphe} \) with
\begin{equation} \label{eq:LBeig}
    \Delta_{\sphe} f = -\ell(\ell+d-2)f, \quad f\in\mathcal{H}_\ell.
\end{equation}
Hence, there are the following orthogonal decompositions:
\begin{equation*}
    L^2(\sphe) = \bigoplus_{\ell=0}^\infty \mathcal{H}_\ell \quad \text{and} \quad L^2_\diamond(\sphe) = \bigoplus_{\ell=1}^\infty \mathcal{H}_\ell.
\end{equation*}
Let \( \{f_{\ell,m}\}_{m \in \mathcal{I}_\ell} \) be an orthonormal basis for \( \mathcal{H}_\ell \); in particular, the index set \( \mathcal{I}_\ell \) has the cardinality \( \dim\mathcal{H}_\ell \).
Thereby, \( \{f_{\ell,m}\}_{\ell\in\N_0, m \in \mathcal{I_{\ell}}} \) is an orthonormal basis for \( L^2(\sphe) \) and similarly \( \{ f_{\ell,m} \}_{\ell \in \N, m \in \mathcal{I_\ell}} \) is an orthonormal basis for \( L^2_\diamond(\sphe) \).
Further, we can show that the orthogonality of the spherical harmonics extends to their gradients, which can be deduced from the following Lemma.
\begin{lemma} \label{lemma:gradient}
    Let \( f_{\ell} \in \mathcal{H}_\ell \) and \( f_{\ell'} \in \mathcal{H}_{\ell'} \), with arbitrary degrees \( \ell, \ell' \in \N_0 \), then
    \begin{equation*}
        \int_{\sphe} \nabla_{\sphe} f_{\ell} \cdot \overline{\nabla_{\sphe} f_{\ell'}} \, \di S
   	    =
        \ell (\ell + d - 2) \int_{\sphe} f_{\ell} \, \overline{f_{\ell'}} \, \di S.
    \end{equation*}
\end{lemma}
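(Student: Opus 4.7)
The plan is to apply Green's first identity on the closed manifold $\sphe$ (which has empty boundary) and then invoke the Laplace--Beltrami eigenvalue relation \eqref{eq:LBeig}. Since spherical harmonics are restrictions of smooth polynomials, the regularity required by Green's identity is automatic, so the argument is essentially a one-line computation.

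More precisely, I would first note that for any $u, v \in C^\infty(\sphe)$ Green's first identity on a closed Riemannian manifold reads
\begin{equation*}
    \int_{\sphe} \nabla_{\sphe} u \cdot \overline{\nabla_{\sphe} v} \, \di S = -\int_{\sphe} (\Delta_{\sphe} u)\, \overline{v} \, \di S,
\end{equation*}
the boundary term vanishing since $\partial \sphe = \emptyset$. Applying this with $u = f_\ell \in \mathcal{H}_\ell$ and $v = f_{\ell'} \in \mathcal{H}_{\ell'}$, and substituting $\Delta_{\sphe} f_\ell = -\ell(\ell+d-2) f_\ell$ from \eqref{eq:LBeig}, immediately yields
\begin{equation*}
    \int_{\sphe} \nabla_{\sphe} f_\ell \cdot \overline{\nabla_{\sphe} f_{\ell'}} \, \di S = \ell(\ell+d-2) \int_{\sphe} f_\ell \, \overline{f_{\ell'}} \, \di S,
\end{equation*}
which is the claimed identity.

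There is no real obstacle here; the only point worth a sentence of care is justifying Green's identity on $\sphe$ without invoking heavy machinery. I would either cite a standard reference or, if a self-contained argument is preferred, note that the spherical gradient and Laplace--Beltrami operator can be defined via the extension $\tilde f(x) = f(x/\|x\|)$ on $\R^d \setminus \{0\}$, so that the identity reduces to the flat Green's identity applied to a thin spherical shell, with the two boundary contributions cancelling by homogeneity of $\tilde f$. Once the identity is in hand, the symmetry (self-adjointness) of $\Delta_{\sphe}$ is implicit, and the statement follows without needing to separate the cases $\ell = \ell'$ and $\ell \neq \ell'$.
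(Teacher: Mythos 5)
Your argument is correct and is essentially the paper's own proof: the paper justifies the integration by parts exactly via the device you sketch at the end, extending \( f_\ell \) and \( f_{\ell'} \) radially to a spherical shell \( B \setminus B_\varepsilon \), applying the flat Green identity, and noting the boundary terms vanish because the radial derivatives of the extensions are zero. Invoking Green's first identity directly on the closed manifold \( \sphe \) together with \eqref{eq:LBeig} is the same computation in slightly more abstract packaging.
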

\begin{proof}
    In the following, let \( \nabla \) denote the gradient in \( B \) and \( \nabla_\sphe \) denote the surface gradient.
    Let us artificially extend \( f_\ell \) and \( f_{\ell'} \) to \( B \setminus \{ 0 \} \) as constant in the radial dimension.
    In polar coordinates we have
    \begin{equation} \label{help1}
        \big(\nabla f_\ell \cdot \overline{\nabla f_{\ell'}}\big)(r, \theta) = r^{-2} \big(\nabla_\sphe f_\ell \cdot \overline{\nabla_\sphe f_{\ell'}}\big)(1, \theta),
    \end{equation}
    and from \eqref{eq:LBeig} it follows that
    \begin{equation} \label{help2}
        \big(\Delta f_\ell\big)(r, \theta) = r^{-2} \big(\Delta_\sphe f_\ell\big)(1, \theta) = r^{-2} \ell (\ell + d - 2) f_\ell(\theta).
    \end{equation}

    Define \( B_\varepsilon \) to be an origin centered ball with radius \( \varepsilon \in (0, 1) \) and consider the integral
    \begin{equation*}
        \int_{B \setminus B_\varepsilon} \nabla f_\ell \cdot \overline{\nabla f_{\ell'}} \, \di x
        =
        \int_\sphe \overline{f_{\ell'}} \,  \frac{\partial f_\ell}{\partial r} \, \di S - \int_{\sphe_\varepsilon} \overline{f_{\ell'}} \,  \frac{\partial f_\ell}{\partial r} \, \di S - \int_{B \setminus B_\varepsilon} \overline{f_{\ell'}} \,  \Delta f_\ell \, \di x.
    \end{equation*}
    For a radially constant function \( f_\ell \), the normal derivatives vanish and thus the surface integrals are zero.
    From \eqref{help1} and \eqref{help2} we see that the radial contributions cancel from both sides and the claim follows.
\end{proof}

\subsection{Jacobi polynomials}

We define the following \emph{shifted} and \emph{normalized} Jacobi polynomials
\begin{equation} \label{eq:Jacobi}
    P_k(r) = \sqrt{2k + d} \, \sum^k_{q = 0} (-1)^{q} \binom{k}{q} \binom{k + q + d-1}{k} \, r^q, \quad k \in \N_0, \quad r \in (0, 1).
\end{equation}
The definition of \eqref{eq:Jacobi} is in accordance with \cite[\S 18.5(iii)~Eq.~18.5.7]{NIST}, after using a change of variables \( x \mapsto 1 - 2r \), fixing the parameters \( \alpha = d-1 \) and \( \beta = 0 \), and normalizing with respect to the \( L^2_{r^{d-1}((0,1))} \) inner product.
With these modification, we have orthonormality with respect to the order index \( k \),
\begin{equation} \label{eq:Jacobi_inner}
    \inner{P_k(r), P_{k'}(r)}_{L^2_{r^{d-1}}((0,1))} = \int^1_0 P_k(r) \, P_{k'}(r) \, r^{d-1} \di r = \delta_{k,k'}.
\end{equation}
We denote the Kronecker delta by \( \delta_{k,k'} \) in equation \eqref{eq:Jacobi_inner}.
Correspondingly, the recurrence identity \cite[\S 18.9(i)~Eq.~18.9.2\_1]{NIST} takes the form
\begin{multline} \label{eq:Jacobi_recurrence}
    r P_k(r) = -\sqrt{\frac{2k+d}{2k+d-2}} \frac{k (k + d - 1)}{(2k+d-1)(2k+d)} P_{k-1}(r) \\
    + \frac{1}{2} \left( \frac{(d-1)^2}{(2k+d-1)(2k+d+1)} + 1 \right) P_k(r) 
    - \sqrt{\frac{2k+d}{2k+d+2}} \frac{(k+1) (k+d)}{(2k + d)(2k + d + 1)} P_{k+1}(r),
\end{multline} 
with the convention \( P_{-1}(r) = 0 \).

From the orthonormality condition \eqref{eq:Jacobi_inner}, we see that the set of polynomials \eqref{eq:Jacobi} are linearly independent in \( L^2_{r^{d-1}}((0,1)) \).
We can use the Weierstrass approximation theorem to show that monomials of the type \( r^k \) with \( k \in \N_0 \) are dense in \( L^2_{r^{d-1}}((0,1)) \).
Density of \eqref{eq:Jacobi} in \( L^2_{r^{d-1}}((0,1)) \) is inherited from these monomials, since for a fixed \( n \in \N_0 \), we have
\[ \spanm \{ r^k \mid 0 \leq k \leq n \} = \spanm \{ P_k(r) \mid 0 \leq k \leq n \}, \]
which can be directly observed from the following lemma.
\begin{lemma} \label{lemma:Jacobi}
    Let \( k \in \N_0 \), then
    \begin{equation} \label{eq:Monomial_exp}
        r^k = \sum^k_{q=0}  \chi_{k,q} \, P_q(r), \quad \text{where} \quad \chi_{k,q} = (-1)^q \, \frac{\sqrt{2q+d} \, (k + d - 1)! \, k!}{ (k + d + q)! \, (k - q)!}.
    \end{equation}
\end{lemma}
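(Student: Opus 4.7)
I will prove the lemma by computing the coefficients in the basis expansion of $r^k$ using the orthonormality of the Jacobi polynomials together with a Rodrigues-type representation.

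Since $P_q$ is a polynomial of degree $q$ that is orthogonal to every polynomial of degree less than $q$ in $L^2_{r^{d-1}}((0,1))$, one has $\inner{r^k, P_q}_{L^2_{r^{d-1}}((0,1))} = 0$ for $q > k$. Combined with the fact that $\{P_q\}_{q \in \N_0}$ is an orthonormal basis (already established above), the expansion
\[
    r^k = \sum_{q=0}^{k} c_{k,q} \, P_q(r), \qquad c_{k,q} := \inner{r^k, P_q}_{L^2_{r^{d-1}}((0,1))},
\]
terminates at $q = k$. The task is therefore reduced to computing $c_{k,q}$ and showing that it equals $\chi_{k,q}$.

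To evaluate $c_{k,q}$, I will use the Rodrigues formula for Jacobi polynomials (cf.\ \cite[\S 18.5(ii)]{NIST}). After the change of variables $x = 1-2r$, specialization to $\alpha = d-1$, $\beta = 0$, and multiplication by the normalization constant $\sqrt{2q+d}$, it takes the form
\[
    P_q(r) = \frac{\sqrt{2q+d}}{q!} \, r^{1-d} \, \frac{d^q}{dr^q}\!\left[ r^{q+d-1}(1-r)^q \right].
\]
Substituting this into the definition of $c_{k,q}$, the weight $r^{d-1}$ cancels with the $r^{1-d}$ factor and one obtains
\[
    c_{k,q} = \frac{\sqrt{2q+d}}{q!} \int_0^1 r^k \, \frac{d^q}{dr^q}\!\left[ r^{q+d-1}(1-r)^q \right] dr.
\]

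The key step is now to integrate by parts $q$ times. The boundary terms all vanish: at $r=0$ the factor $r^{q+d-1}$ (with $d \geq 2$) kills every derivative of order up to $q-1$, while at $r=1$ the factor $(1-r)^q$ does the same. This leaves
\[
    c_{k,q} = (-1)^q \frac{\sqrt{2q+d}}{q!} \int_0^1 \left( \frac{d^q}{dr^q} r^k \right) r^{q+d-1}(1-r)^q \, dr = (-1)^q \frac{\sqrt{2q+d}}{q!} \frac{k!}{(k-q)!} \int_0^1 r^{k+d-1}(1-r)^q \, dr.
\]
The remaining integral is a Beta function,
\[
    \int_0^1 r^{k+d-1}(1-r)^q \, dr = B(k+d, q+1) = \frac{(k+d-1)! \, q!}{(k+d+q)!},
\]
and substituting this value yields exactly $c_{k,q} = \chi_{k,q}$, proving the claim.

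The only potentially delicate point is the vanishing of the boundary terms in the repeated integration by parts, but this is immediate from the factorizations $r^{q+d-1}$ and $(1-r)^q$; the rest is a routine bookkeeping of factorials. No combinatorial identity on the coefficients of the explicit series \eqref{eq:Jacobi} is needed, which is the chief advantage of going through the Rodrigues representation.
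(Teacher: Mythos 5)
Your proof is correct, but it takes a genuinely different route from the paper. The paper proves the identity by induction on \( k \): it writes \( r^{k+1} = r \cdot r^k \), inserts the induction hypothesis, and applies the three-term recurrence \eqref{eq:Jacobi_recurrence}; the burden there is the combinatorial verification that \( \chi_{k,q+1}A_{q+1}+\chi_{k,q}B_q+\chi_{k,q-1}C_{q-1}=\chi_{k+1,q} \). You instead compute the expansion coefficients directly as inner products \( \inner{r^k,P_q}_{L^2_{r^{d-1}}((0,1))} \) via the shifted Rodrigues representation, \( q \)-fold integration by parts, and a Beta integral. I checked the normalization of the Rodrigues formula after the substitution \( x\mapsto 1-2r \) with \( \alpha=d-1 \), \( \beta=0 \): the powers of \( 2 \) cancel exactly and your prefactor \( \sqrt{2q+d}/q! \) is right, as is the vanishing of the boundary terms (the order-\( j \) derivatives with \( j\le q-1 \) retain factors \( r^{q+d-1-j} \), where \( q+d-1-j\ge d\ge 2 \), and \( (1-r)^{q-j} \) with \( q-j\ge 1 \)); the Beta evaluation then reproduces \( \chi_{k,q} \) exactly. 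Your approach trades the recurrence bookkeeping for one imported classical formula, and has the extra merit of exhibiting \( \chi_{k,q} \) as genuine Fourier coefficients and giving \( \inner{r^k,P_q}=0 \) for \( q>k \) for free. One presentational caveat: you justify the form of the expansion by saying \( \{P_q\} \) is an orthonormal basis ``already established above,'' but in the paper the completeness of this system is \emph{deduced from} the present lemma, so invoking it here is circular as stated. The fix costs nothing: you only need that \( P_0,\dots,P_k \) are polynomials of degrees \( 0,\dots,k \) with nonzero leading coefficients (visible from \eqref{eq:Jacobi}), hence span the polynomials of degree at most \( k \), and that the orthonormality \eqref{eq:Jacobi_inner} identifies the coefficients of \( r^k \) in that finite-dimensional span with the inner products you compute.
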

\begin{proof}
    We may use induction to prove the result.
    The base case \( k = 0 \), is easily verified
    \[ r^0 = \sum^0_{q=0} \chi_{0,0} \, P_0(r) = 1. \]

    We initialize the induction step by assuming that \eqref{eq:Monomial_exp} holds for some fixed \( k \geq 0 \).
    Further, let us write the recurrence identity \eqref{eq:Jacobi_recurrence} in a condensed form
    \[ rP_k(r) = A_k P_{k-1}(r) + B_k P_k(r) + C_k P_{k+1}(r), \]
    where the labels \( A_k, B_k \text{ and } C_k \) correspond to the three coefficients in \eqref{eq:Jacobi_recurrence}, respectively.
    Finally, a direct calculation reveals that
    \begin{align*}
        r^{k+1} &= r \, \Big( \sum^k_{q=0} \chi_{k,q} \, P_q(r) \Big) \\[-3mm]
        &= (\chi_{k,1} A_1 + \chi_{k,0} B_0) P_0(r) + \sum^{k-1}_{q=1} (\chi_{k,q+1} A_{q+1} + \chi_{k,q} B_q + \chi_{k,q-1} C_{q-1}) P_q(r) \\
        &\quad+ (\chi_{k,k} B_k + \chi_{k,k-1} C_{k-1} ) P_k(r) + \chi_{k,k} C_k P_{k+1}(r) \\
        &= \chi_{k+1,0} \, P_0(r) + \sum^{k-1}_{q=1} \chi_{k+1,q} \, P_q(r) + \chi_{k+1,k} \, P_k(r) + \chi_{k+1,k+1} \, P_{k+1}(r).
    \end{align*}
    After collecting the terms into a single sum, the proof is completed via a straightforward induction argument.
\end{proof}

\section{Proof of Theorem~\ref{main_thm}} \label{sec:proof}

We begin the proof by assuming that \( \eta \in \mathcal{R} \) is fixed and perform the following series expansion
\begin{equation} \label{proof:eta}
    \eta = \sum^\infty_{k=0} a_k(\eta) \, P_k, \quad a_k(\eta) := \int^1_0 \eta(r) \, P_k(r) \, r^{d-1} \di r,
\end{equation}
where \( P_k \) are the Jacobi polynomials given by \eqref{eq:Jacobi}.

Consider the problem \eqref{eq:strong_form} with \( \gamma = 1 \), and choose a spherical harmonic function \( f_{\ell, m} \in \mathcal{H}_\ell \) as the Neumann boundary value. 
Then, the corresponding harmonic solution is
\begin{equation}
    u_{\ell, m}(r, \theta) = \ell^{-1} r^\ell f_{\ell, m}(\theta), \text{ whenever } \ell > 0.
\end{equation}
We focus on using the basis elements of the zero-mean space \( L^2_\diamond(\sphe) \), and therefore assume \( \ell \geq 1 \).

Inspired by the characterization in \eqref{eq:frechet_redef}, we proceed to calculate
\begin{equation} \label{proof:eq1}
    \inner{(F \eta) f_{\ell, m}, f_{\ell', m'}}_{L^2(\sphe)} = -\int_B \eta \nabla u_{\ell, m} \cdot \overline{\nabla u_{\ell', m'}} \,\di x,
\end{equation}
where
\begin{equation} \label{eq:proof_grad}
    \big( \nabla u_{\ell, m} \cdot \overline{\nabla u_{\ell', m'}} \big) (r, \theta) 
    = r^{\ell+\ell'-2} \big( f_{\ell, m}(\theta) \overline{f_{\ell', m'}(\theta)}
    + (\ell \, \ell')^{-1} \big( \nabla_{\sphe}f_{\ell, m} \cdot \overline{\nabla_{\sphe} f_{\ell', m'}} \big) (1,\theta) \big).
\end{equation}
Combining Lemma~\ref{lemma:gradient} with \eqref{proof:eq1} yields
\begin{equation} \label{proof:innerproduct}
    \inner{(F \eta) f_{\ell, m}, f_{\ell', m'}}_{L^2(\sphe)} = - \frac{2 \ell + d - 2}{\ell} \left( \int^1_0 \eta(r) \, r^{2 \ell - 2} \, r^{d-1} \, \di r \right) \delta_{\ell, \ell'} \delta_{m , m'}.
\end{equation}
We observe that, in order for the inner product to be potentially non-zero, the indices of the spherical harmonics must coincide.
Note that Lemma~\ref{lemma:gradient} is applicable in \eqref{proof:eq1} only because \( \eta \) has essentially no angular component.
Indeed, even a mild angular dependence of \( \eta \) would prevent Lemma~\ref{lemma:gradient} from being applied in its current form.

Expanding \( \eta \) as in \eqref{proof:eta}, applying Lemma~\ref{lemma:Jacobi} to rewrite the monomial \( r^{2 \ell - 2} \) in terms of Jacobi polynomials, and then using their orthonormality, we obtain
\begin{align*}
    \inner{(F \eta) f_{\ell, m}, f_{\ell, m}}_{L^2(\sphe)} &= - \frac{2 \ell + d - 2}{\ell} \sum^{2 \ell - 2}_{k=0} a_k(\eta) \, \chi_{2 \ell - 2, k} \\
    &= \sum^{2 \ell - 2}_{k=0} (-1)^{k+1} a_k(\eta) \frac{\sqrt{2k+d}}{\ell} \frac{(2\ell-2+d)! \, (2\ell-2)!}{(2\ell-2+d+k)! \, (2\ell-2-k)!} \\
    &= \lambda_\ell(\eta).
\end{align*}
It is already clear that the operator \( F \eta \) (defined by \eqref{proof:innerproduct} for \( \eta \in \mathcal{R} \)) expressed in the spherical harmonic basis acts as a diagonal operator
\begin{equation} \label{proof:diagonal}
    (F \eta) : f_{\ell, m} \mapsto \lambda_\ell(\eta) \, f_{\ell, m}.
\end{equation}
However, it remains unclear whether the operator \( F \eta \) is an element of \( \mathscr{L}(L^2_\diamond(\sphe)) \). 
To address this, we proceed by analyzing the behavior of the eigenvalues \( \lambda_\ell(\eta) \).

Let us keep \( d \geq 2 \) fixed, then we may use the Cauchy–Schwarz inequality to show that
\begin{align} \label{proof:proof_up1}
    \abs{\lambda_\ell(\eta)} &\leq \sqrt{\sum^{2 \ell - 2}_{k=0} \abs{a_k(\eta)}^2} \, \sqrt{\sum^{2\ell-2}_{k=0} \frac{2 k+d}{\ell^2} \left( \frac{(2\ell-2+d)! \, (2\ell-2)!}{(2\ell-2+d+k)! \, (2\ell-2-k)!} \right)^2} \notag \\
    &\leq \abs{\sphe}^{-\frac{1}{2}} \| \eta \|_{L^2(B)} \, \sqrt{\sum^{2\ell-2}_{k=0} \frac{2 k+d}{\ell^2} \frac{(2\ell-2+d)! \, (2\ell-2)!}{(2\ell-2+d+k)! \, (2\ell-2-k)!}}.
\end{align}
In the above, we denote the surface area of the unit sphere with \( \abs{\sphe} \). Also, the fraction of factorials is at most \( 1 \), and in order to simplify the analysis we omit squaring it.

Before proceeding with the proof, let us derive an auxiliary result in the form of a lemma.
\begin{lemma} \label{proof:lemma_aux}
    Let \( L \in \N_0 \) and \( d \in \R \), then
    \begin{equation} \label{aux:identity}
        \sum^L_{k=0} (2k+d) \frac{\Gamma(s+1)}{\Gamma(k+s+1)} \frac{L!}{(L-k)!} = L + d.
    \end{equation}
\end{lemma}
\begin{proof}
    Let \( L \in \N_0 \) and \( d \in \R \), and denote \( s := L + d \).
    The case \( L = 0 \) is straightforward, so in what follows we restrict our attention to \( L \geq 1 \), where the argument is nontrivial.

    For \( k=0 \) the summand in \eqref{aux:identity} reduces to \( d \), so we can write
    \begin{equation}
        \mu = d + \sum^L_{k=1} (2k+d) \frac{\Gamma(s+1)}{\Gamma(k+s+1)} \frac{L!}{(L-k)!}.
    \end{equation}
    The goal is to show that \( \mu = d + L \).

    We start by recalling the binomial theorem, which for \( (1+y)^L \) reads
    \begin{equation} \label{aux:binomial_thm}
        \sum^L_{k=0} \binom{L}{k} y^k = (1+y)^L, \quad y \in \R.
    \end{equation}
    We will use \eqref{aux:binomial_thm} to derive two identities used later in this proof.
    The first identity is acquired by differentiating \eqref{aux:binomial_thm} with respect to \( y \), which results in
    \begin{equation} \label{aux:binom_ide1}
        \sum^L_{k=1} k \binom{L}{k} y^{k-1} = L (1+y)^{L-1}.
    \end{equation}
    The second identity is obtained by differentiating \eqref{aux:binomial_thm} twice, multiplying both sides with \( y \) and after that adding \eqref{aux:binom_ide1} on both sides:
    \begin{equation*}
        \left( \sum^L_{k=2} k(k-1) \binom{L}{k} y^{k-2} \right) y + \sum^L_{k=1} k \binom{L}{k} y^{k-1} = \left( L (L-1)(1+y)^{L-2} \right) y + L(1+y)^{L-1},
    \end{equation*}
    which simplifies to
    \begin{equation} \label{aux:binom_ide2}
        \sum^L_{k=1} k^2 \binom{L}{k} y^{k-1} = L (1 + Ly) (1 + y)^{L-2}.
    \end{equation}

    The next step is to use the beta function and write the to-be evaluated sum as an integral.
    The integral form of the beta function for \( z_1,z_2 \in \C \), with positive real parts, i.e., \( \Re(z_1), \Re(z_2) > 0 \) is
    \begin{equation*}
        B(z_1, z_2) = \frac{\Gamma(z_1)\Gamma(z_2)}{\Gamma(z_1+z_2)} = \int^1_0 t^{z_1-1} (1-t)^{z_2-1} \, \di t.
    \end{equation*}
    Thus, for the summation index \( k \geq 1 \) we may write
    \begin{equation} \label{aux:beta_ide}
        \frac{\Gamma(s+1)}{\Gamma(k+s+1)} = \frac{1}{(k-1)!} \int^1_0 t^s (1-t)^{k-1} \, \di t.
    \end{equation}
    Defining
    \begin{equation} \label{aux:sigma}
        \xi(t) := \sum^L_{k=1} (2k+d)k\binom{L}{k}(1-t)^{k-1},
    \end{equation}
    it follows from \eqref{aux:beta_ide} that
    \begin{equation} \label{aux:int}
        \mu = d + \int^1_0 t^s \xi(t) \, \di t.
    \end{equation}

    We now apply the identities \eqref{aux:binom_ide1} and \eqref{aux:binom_ide2} to derive a closed-form expression for \( \xi(t) \). 
    To this end, we substitute \( y = (1-t) \) in \eqref{aux:binom_ide1} and \eqref{aux:binom_ide2}, and form a linear combination of the resulting expressions.
    This yields
    \begin{align}
        \xi(t) &= 2 \left( L \big( 1 + L(1-t) \big) \big(1 + (1-t) \big)^{L-2} \right) + d \left( L \big(1 + (1-t) \big)^{L-1} \right) \notag \\
        &= L(2-t)^{L-2}(2 + 2s -Lt -st).
    \end{align}
    It is straightforward to verify that
    \begin{equation*}
        t^s \xi(t) = \frac{\di}{\di t} \left( L t^{s+1} (2-t)^{L-1} \right).
    \end{equation*}
    Evaluating the integral \eqref{aux:int} using the above identity yields \( \mu = d + L \), completing the proof.
\end{proof}

We now proceed with the proof of Theorem~\ref{main_thm} and apply Lemma~\ref{proof:lemma_aux} to evaluate the sum in \eqref{proof:proof_up1} exactly:
\begin{equation} \label{proof:the_sum}
    \sum^{2\ell-2}_{k=0} \frac{2 k+d}{\ell^2} \frac{(2\ell-2+d)! \, (2\ell-2)!}{(2\ell-2+d+k)! \, (2\ell-2-k)!}
    =
    \frac{2 \ell -2 + d}{\ell^2}.
\end{equation}
Since, \( \ell \geq 1 \) we further estimate:
\begin{equation}
    \frac{2 \ell - 2 + d}{\ell^2} \leq \frac{d+2}{\ell}.
\end{equation}
Finally, we may substitute this estimate into \eqref{proof:proof_up1} and apply the well-known formula
\[ \abs{\sphe} = \frac{2 \pi^{\frac{d}{2}} }{\Gamma \big( \frac{d}{2} \big)} \]
to obtain the desired bound for the eigenvalues,
\begin{equation} \label{proof:lambda_bound}
    \abs{\lambda_\ell(\eta)} \leq C_d \, \| \eta \|_{L^2(B)} \, \ell^{-\frac{1}{2}}, \quad C_d = \sqrt{\frac{(d+2) \, \Gamma \big( \frac{d}{2} \big)}{2 \pi^\frac{d}{2}}}.
\end{equation}

The final step in this proof is to formulate the eigenvalue decomposition of \( F \eta \) as an operator in \( \mathscr{L}(L^2_\diamond(\sphe)) \).
To be quite precise, we have only established that the action of \( F \eta \) on individual spherical harmonics \( f_{\ell, m} \) is well-defined.

Let \( g \in L^2_\diamond(\sphe) \) be arbitrary and let us consider a truncated version of it,
\[ g_N := \sum^N_{\ell = 1} \sum_{m \in \mathcal{I_\ell}} c_{\ell, m} f_{\ell, m}, \quad c_{\ell, m} = \inner{g, f_{\ell, m}}_{L^2(\sphe)}. \]
Here, the cardinality of the index set \( \mathcal{I_\ell} \) is given by \eqref{dimHl}.

The mapping \( (F \eta) g_N \) is still well-defined as it is a \emph{finite} linear combination of elements \( (F \eta) f_{\ell, m} \).
Also, we have \( (F \eta) g_N \in L^2_\diamond(\sphe) \), and thus we may expand it in the spherical harmonic basis, which results in
\begin{equation} \label{proof:FetagN}
    (F \eta) g_N = \sum_{\ell', m'} \Big\langle (F \eta) \Big(\sum^N_{\ell=1} \sum_{m \in \mathcal{I_\ell}} c_{\ell, m} f_{\ell, m} \Big), f_{\ell',m'}  \Big\rangle_{L^2(\sphe)} f_{\ell', m'}.
\end{equation}
From the orthogonality revealed by \eqref{proof:innerproduct}, we see that the inner product in \eqref{proof:FetagN} is potentially non-zero only when the indices of the basis functions match.
Therefore,
\begin{equation} \label{proof:Fetaglim}
    (F \eta) g_N = \sum^N_{\ell = 1} \sum_{m \in \mathcal{I_\ell}} \langle (F \eta) c_{\ell, m} f_{\ell, m}, f_{\ell, m} \rangle_{L^2(\sphe)} f_{\ell, m}
    = \sum^N_{\ell = 1} \sum_{m \in \mathcal{I_\ell}} \lambda_\ell(\eta) \inner{g, f_{\ell, m}}_{L^2(\sphe)} f_{\ell, m}.
\end{equation}
As the truncation index \( N \to \infty \), the right-hand side of \eqref{proof:Fetaglim} converges in \( L^2_\diamond(\sphe) \), due to the fact that the eigenvalues \( \lambda_\ell(\eta) \) accelerate the convergence of a sum that is already convergent.
Hence, the continuous extension of \( F \eta \) is given by the limit
\[ (F \eta) g =  \sum_{\ell, m} \lambda_\ell(\eta) \inner{g, f_{\ell, m}}_{L^2(\sphe)} f_{\ell, m}, \quad g \in L^2_\diamond(\sphe). \]
The proof is now complete.
\hfill \qed

\subsection*{Acknowledgements}

	I wish to thank Nuutti Hyv\"onen for insightful discussions and guidance related to the subject of the paper, and I wish to thank Vanni Noferini for providing a proof for Lemma~\ref{proof:lemma_aux}.
    The project is supported by the Research Council of Finland (decisions 353081 and 359181).
    %

\bibliographystyle{plain}
	

\end{document}